\DeclareMathOperator{\reg}{reg}
\DeclareMathOperator{\Tor}{Tor}
\DeclareMathOperator{\Z}{\mathbb{Z}}
\DeclareMathOperator{\N}{\mathbb{N}}
\DeclareMathOperator{\coker}{coker}
\newtheorem{thm}{Theorem}[section]
\theoremstyle{definition}
\newtheorem{dfn}[thm]{Definition}
\newtheorem{rem}[thm]{Remark}
\newtheorem{example}[thm]{Example}
\newtheorem{question}[thm]{Question}
\newcommand{\IE}[1]{\begin{IEEEeqnarray*}{#1}}
\newcommand{\IEE}{\end{IEEEeqnarray*}}
\begin{document}

\title[LinearTruncations.m2]{Linear Truncations Package for Macaulay2}
\author{Lauren Cranton Heller}
\address{Department of Mathematics, University of California, 970 Evans Hall, Berkeley, CA 94720, USA}
\email{lch@math.berkeley.edu}
\author{Navid Nemati}
\address{Universit\'{e} C\^{o}te d'Azur, Inria, 2004 route des Lucioles, 06902 Sophia Antipolis, France}
\email{navid.nemati@inria.fr}

\begin{abstract}
We introduce the Macaulay2 package $\mathtt{LinearTruncations}$ for finding and studying the truncations of a multigraded module over a standard multigraded ring that have linear resolutions.
\end{abstract}

\maketitle
\section{Introduction and preliminaries\texorpdfstring{\\
$\mathtt{multigradedPolynomialRing}$, $\mathtt{isLinearComplex}$, $\mathtt{supportOfTor}$}{}}\label{sec:notation}

\noindent Castelnuovo--Mumford regularity is a fundamental invariant in commutative algebra and algebraic geometry. Roughly speaking, it measures the complexity of a module or sheaf. Let $S$ be a polynomial ring with the standard grading and $M$ a finitely generated $S$-module. In this case Castelnuovo--Mumford regularity is typically defined in terms of either the graded Betti numbers of $M$ or the vanishing of local cohomology modules $H^i_\mathfrak{m}(M)$, where $\mathfrak{m}$ is the maximal homogeneous ideal of $S$. Eisenbud and Goto show in \cite{EisenbudGoto} that the Castelnuovo--Mumford regularity of $M$ is the minimum degree where the truncation of $M$ has a linear resolution.

An extension of Castelnuovo--Mumford regularity to multigraded modules was introduced by Hoffman and Wang \cite{HoffmanWang} in a special case, and later by Maclagen and Smith \cite{MaclagenSmith} and Botbol and Chardin \cite{BotbolChardin} in a more general setting.  The multigraded regularity of a module is a region in $\Z^r$ rather than an integer.  It is invariant under positive translations and thus can be described by its minimal elements.  An affirmative answer to the following open question would reduce this to a finite computation.

\begin{question}\label{ques:bound}
	Can the minimal elements of the regularity of $M$ be bounded in terms of $S$ and the Betti numbers of $M$?
\end{question}

In analogy to the singly graded case, one may ask about the relation between multigraded Castelnuovo--Mumford regularity and the multidegrees where the truncation of a module has a linear resolution, which we call the \textit{linear truncation region}.  (See Definitions \ref{def:linear} and \ref{def:region}.)  In the multigraded setting these regions can differ, but a bound on the linear truncations would answer Question~\ref{ques:bound} by \cite[Proposition~4.11]{EisenbudErmanSchreyer}.  Our goal is to compute the minimal elements of the linear truncation region within a specified finite region of $\Z^r$.

We introduce the \textit{LinearTruncations} package for \cite{M2}, which provides tools for studying the resolutions of truncations of modules over rings with standard multigradings.  Given a module and a bounded range of multidegrees, our package can identify all linear truncations in the range. The algorithm uses a search function that is also applicable to other properties of modules described by sets of degrees.  The examples here were computed using version 1.18 of Macaulay2 and version 1.0 of \textit{LinearTruncations}.  

In section \ref{Section Theorems} we describe the main algorithms of this package, $\mathtt{findRegion}$ and $\mathtt{linearTruncations}$. 
In Section \ref{sec: relation}, we discuss the relation between the linear truncation region and the multigraded regularity and we introduce $\mathtt{regularityBound}$ and $\mathtt{linearTruncationsBound}$ as faster methods for calculating subsets of the multigraded regularity and linear truncation regions, respectively.

To set our notation,  let $k$ be a field and 
\[S=k\left[x_{i,j} \mid 1\leq i\leq r, \, 0\leq j \leq n_i\right]\]
a $\Z^r$-graded polynomial ring with $\deg x_{ij}=\mathbf{e}_i$, the $i$-th standard basis vector in $\Z^r$, for all $j$ (so that $S$ is the coordinate ring of a product $\mathbb{P}^{n_1}\times\cdots\times\mathbb{P}^{n_r}$ of projective spaces).  The function $\mathtt{multigradedPolynomialRing}$ produces such rings:
\begin{verbatim}
    i1 : needsPackage "LinearTruncations"
    o1 = LinearTruncations
    o1 : Package
    i2 : S = multigradedPolynomialRing {1,2}
    o2 = S
    o2 : PolynomialRing
    i3 : degrees S
    o3 = {{1, 0}, {1, 0}, {0, 1}, {0, 1}, {0, 1}}
    o3 : List
\end{verbatim}
Let $M$ be a finitely generated $\Z^r$-graded $S$-module.  For a multidegree $\mathbf{d}=(d_1,\dots,d_r)\in \Z^r$, write $\bar{\mathbf{d}}$ for the total degree $d_1+\cdots +d_r$ of $\mathbf{d}$ and  $M_{\geq \mathbf{d}}$ for the truncation $\oplus _{\mathbf{d}'\geq\mathbf{d}} M_{\mathbf{d}'}$ of $M$ at $\mathbf{d}$, where $\mathbf{d}'\geq\mathbf{d}$ if this inequality is true for each coordinate.
\begin{dfn}\label{def:linear}
	A homogeneous chain complex
	$$0\leftarrow G_0\leftarrow G_1\leftarrow\cdots\leftarrow G_k\leftarrow 0$$
	of free $S$-modules is \emph{linear} if $G_0\simeq\bigoplus S(-\mathbf{d})$ for some $\mathbf{d}\in\Z^r$ and for each free summand $S(-\mathbf{d}')$ of $G_i$ we have $\bar{\mathbf{d}}'=\bar{\mathbf{d}}+i$.
\end{dfn}
The function $\mathtt{isLinearComplex}$ checks this condition.  To print the degrees appearing in the complex use $\mathtt{supportOfTor}$.
\begin{verbatim}
    i4 : B = irrelevantIdeal S
    o4 = ideal (x   x   , x   x   , x   x   , x   x   , x   x   , x   x   )
                 0,1 1,2   0,0 1,2   0,1 1,1   0,0 1,1   0,1 1,0   0,0 1,0
    o4 : Ideal of S
    i5 : F = res comodule B
           1      6      9      5      1
    o5 = S  <-- S  <-- S  <-- S  <-- S  <-- 0
                                          
         0      1      2      3      4      5
    o5 : ChainComplex
    i6 : netList supportOfTor F
\end{verbatim}
\pagebreak
\begin{verbatim}
         +------+------+
    o6 = |{0, 0}|      |
         +------+------+
         |{1, 1}|      |
         +------+------+
         |{2, 1}|{1, 2}|
         +------+------+
         |{2, 2}|{1, 3}|
         +------+------+
         |{2, 3}|      |
         +------+------+
    i7 : isLinearComplex F
    o7 = false
\end{verbatim}
\begin{dfn}\label{def:region}
	The \emph{linear truncation region} of $M$ is
	\[\{ \mathbf{d} \mid M_{\geq\mathbf{d}} \,\text{has a linear resolution with generators in degree $\mathbf{d}$}\}\subset \Z^r.\]
\end{dfn}
\begin{rem}\label{rem: minimal}
	Our definitions imply that the nonzero entries in the differential matrices of a linear resolution will have total degree 1.  We also require that the generators have degree $\mathbf{d}$ so that a linear resolution for $M_{\geq\mathbf{d}}$ implies the existence of a linear resolution for $M_{\geq\mathbf{d}'}$ whenever $\mathbf{d}'\geq\mathbf{d}$.  We can thus describe the linear truncation region by giving its minimal elements.
\end{rem}

\section{Finding linear truncations\texorpdfstring{\\ $\mathtt{linearTruncations}$, $\mathtt{findMins}$, $\mathtt{findRegion}$}{}}\label{Section Theorems}

\noindent Eisenbud, Erman, and Schreyer proved in \cite{EisenbudErmanSchreyer} that the linear truncation region of $M$ is non-empty.  In particular it contains the output of the function $\mathtt{coarseMultigradedRegularity}$ from their package \emph{TateOnProducts} \cite{TateOnProducts}.  However, in general this degree is neither a minimal element itself nor greater than all the minimal elements.  (See Example~\ref{ex: outside}.)

The function $\mathtt{linearTruncations}$ searches for multidegrees where the truncation of $M$ has a linear resolution by calling the function $\mathtt{findRegion}$, which implements Algorithm~\ref{alg:findRegion}.  Since we do not know of a bound on the total degree of the minimal elements in the linear truncation region given the Betti numbers of $M$, $\mathtt{linearTruncations}$ is not guaranteed to produce all semigroup generators.  By default it searches above the componentwise minimum of the degrees of the generators of $M$ and below the degree with all coordinates equal to $r+1$, where $r$ is the output of $\mathtt{regularity}$.  Otherwise the range is taken as a separate input.
\begin{example}\label{ex: outside}
Let $S=k[x_{0,0},x_{0,1}, x_{0,2}, x_{1,0},x_{1,1},x_{1,2}, x_{1,3}]$ be the Cox ring of $\mathbb{P}^2\times \mathbb{P}^3$. For each $d\geq 2$, let $\phi_d\colon S(-d,-d)^6\rightarrow S(0,-d)^2\oplus S(-d,0)^4$ be given by 
$$
\begin{pmatrix}
x_{0,0}^d&x_{0,1}^d&x_{0,2}^d&0&0&0\\
0&0&0&x_{0,1}^d&x_{0,0}^d&x_{0,2}^d\\
x_{1,0}^d&0&0&x_{1,0}^d&0&0\\
0&x_{1,1}^d&0&0&x_{1,1}^d&0\\
0&0&x_{1,2}^d&0&0&x_{1,2}^d\\
0&0&0&x_{1,3}^d&0&0
\end{pmatrix},
$$
and define $M^{(d)} := \coker\phi_d$. The $\mathtt{coarseMultigradedRegularity}$ of $M^{(3)}$ is $\{3,3\}$, the $\mathtt{regularity}$ of $M^{(3)}$ is 5, and $\{3,3\}$ and $\{8,2\}$ are minimal elements of the linear truncation region.  Since $\{8,2\}$ is not below $\{5+1,5+1\}$ it will not be returned by the $\mathtt{linearTruncations}$ function with the default options.

Based on the computations from $M^{(d)}$ for $2\leq d\leq 10$ we expect that for $d\geq 2$ the module $M^{(d)}$ will have $\mathtt{coarseMultigradedRegularity}$ equal to $\{d,d\}$, with $\{d,d\}$ and $\{3d-1,d-1\}$ both minimal elements of the linear truncation region.
\end{example}

\begin{algorithm}\label{alg:findRegion}
	\SetKwInOut{Input}{Input}\SetKwInOut{Output}{Output}
	\Input{a module $M$, a Boolean function $f$, and a range $(\mathbf{a},\mathbf{b})$}
	\Output{minimal elements between $\mathbf{a}$ and $\mathbf{b}$ where $M$ satisfies $f$}
	$A:= \emptyset$\; $K := \{\mathbf{a}\}$\;
	\While{$K\neq\emptyset$}{
		$\mathbf{d} :=$ first element of $K$\;
		$K=K\setminus\{\mathbf{d}\}$\;
		\If{$\mathbf{d}\notin A+\N^r$}{
			\eIf{$M$ satisfies $f$ at $\mathbf{d}$}{
				$A=A\cup\{\mathbf{d}\}$\;
			}{
				\For{$1\leq i\leq r$}{
					\If{$\mathbf{d}+\mathbf{e}_i\leq\mathbf{b}$}{
						$K=K\cup\{\mathbf{d}+\mathbf{e}_i\}$\;
					}
				}
			}
		}
	}
	\Return {$A$}
	\caption{$\mathtt{findRegion}$}
\end{algorithm}

At each step of Algorithm~\ref{alg:findRegion} the set $A$ contains degrees satisfying $f$ and the set $K$ contains the minimal degrees remaining to be checked.  There are options to initialize $A$ and $K$ differently---degrees in $A$ will be assumed to satisfy $f$, and degrees below those in $K$ will be excluded from the search (and thus assumed not to satisfy $f$).  Supplying such prior knowledge can decrease the length of the computation by limiting the number of times the algorithm calls $f$.

The pseudocode in Algorithm~\ref{alg:findRegion} masks the fact that $A$ and $K$ are stored as monomial ideals in a temporary standard graded polynomial ring.  Similarly, the function $\mathtt{findMins}$ will convert a list of multidegrees to a monomial ideal in order to calculate its minimal elements via a Gr\"{o}bner basis.

\section{Relation to regularity\texorpdfstring{\\ $\mathtt{partialRegularities}$, $\mathtt{linearTruncationsBound}$, $\mathtt{regularityBound}$}{}}\label{sec: relation}

\noindent As discussed above, the minimal element of the linear truncation region of a singly graded module agrees with its Castelnuovo--Mumford regularity, which can be determined from its Betti numbers.  In the multigraded case these concepts are still linked, but their relationship is more complicated.  For instance, the following inclusion is strict:
\begin{thm}\label{thm:regularity}
	If $H^0_B(M)=0$, then the linear truncation region of $M$ is a subset of the multigraded regularity region $\reg M$ of M, as defined in \cite{MaclagenSmith}.
\end{thm}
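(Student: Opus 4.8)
The plan is to verify the two parts of the Maclagan--Smith regularity condition separately: the vanishing of $H^0_B(M)$ in the relevant degrees, which is immediate from the hypothesis, and the vanishing of the higher local cohomology $H^i_B(M)$ for $i\geq 1$, which I would extract from the linear resolution of the truncation. Write $N=M_{\geq\mathbf d}$ for a degree $\mathbf d$ in the linear truncation region and let $0\leftarrow F_0\leftarrow F_1\leftarrow\cdots$ be its linear resolution, so that $F_0=S(-\mathbf d)^{\beta_0}$ and every summand $S(-\mathbf a)$ of $F_i$ satisfies $\bar{\mathbf a}=\bar{\mathbf d}+i$. The first point I would record is that these summands also satisfy $\mathbf a\geq\mathbf d$ coordinatewise: a nonzero homogeneous syzygy among generators of degree $\geq\mathbf d$ has its coefficients in $S$, whose degrees lie in $\N^r$, so inductively every twist appearing in the resolution has the form $\mathbf a=\mathbf d+\mathbf c$ with $\mathbf c\in\N^r$ and $|\mathbf c|=i$.

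Next I would compute $H^\bullet_B(N)$ from the resolution via the hypercohomology spectral sequence, whose $E_1$ terms $H^{k+i}_B(F_i)$ assemble $H^k_B(N)$; it then suffices to make each $H^{k+i}_B(S(-\mathbf a))_{\mathbf p}$ vanish in the degrees $\mathbf p$ prescribed by the regularity region. Since $S$ is the Cox ring of $X=\mathbb P^{n_1}\times\cdots\times\mathbb P^{n_r}$, one has $H^0_B(S)=H^1_B(S)=0$, which disposes of the terms with $k+i\leq 1$, and for $k+i\geq 2$ local and sheaf cohomology agree: $H^{k+i}_B(S(-\mathbf a))_{\mathbf p}\cong H^{k+i-1}(X,\mathcal O_X(\mathbf p-\mathbf a))$. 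The latter is computed by the Künneth formula and is nonzero only when there is a subset $T\subseteq\{1,\dots,r\}$ with $\sum_{t\in T}n_t=k+i-1$, the coordinates of $\mathbf p-\mathbf a$ indexed by $T$ sufficiently negative and the rest nonnegative. Imposing that $\mathbf p$ lie in the Maclagan--Smith region for $H^k_B$, namely $\mathbf p\geq\mathbf d-\lambda$ for some $\lambda\in\N^r$ with $|\lambda|=k-1$, and combining the lower bound this forces on the $T$-coordinates of $\mathbf a$ with the identity $\bar{\mathbf a}=\bar{\mathbf d}+i$ and the coordinatewise bound $\mathbf a\geq\mathbf d$, I would derive $\sum_{t\in T}\lambda_t\geq(k-1)+|T|$. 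As $\sum_{t\in T}\lambda_t\leq|\lambda|=k-1$, this forces $T=\emptyset$, contradicting $\sum_{t\in T}n_t=k+i-1\geq 1$. Hence $H^k_B(N)_{\mathbf p}=0$ throughout the region, so $N$ is $\mathbf d$-regular.

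Finally I would transfer this to $M$. The quotient $Q=M/N$ is supported in degrees not $\geq\mathbf d$ and is annihilated by a power of $B$, hence is $B$-torsion; so $H^j_B(Q)=0$ for $j\geq 1$ and $Q=H^0_B(Q)$. The long exact sequence of $0\to N\to M\to Q\to 0$ then gives $H^k_B(M)\cong H^k_B(N)$ for $k\geq 2$ and, because $Q_{\mathbf p}=0$ for $\mathbf p\geq\mathbf d$, an isomorphism $H^1_B(M)_{\mathbf p}\cong H^1_B(N)_{\mathbf p}$ in those degrees. With the regularity of $N$ in hand this yields the vanishing of $H^i_B(M)$ for $i\geq 1$ in the required region, while the $i=0$ condition holds because $H^0_B(M)=0$ by hypothesis; this is precisely where the hypothesis enters, and is the part that can fail for a module with torsion in the relevant degrees. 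Therefore $\mathbf d\in\reg M$.

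The main obstacle is the cohomological core of the middle paragraph: matching the exact shape of the Maclagan--Smith region, the shift by $\lambda$ with $|\lambda|=i-1$, against the Künneth description of $H^\bullet(X,\mathcal O_X(\mathbf p-\mathbf a))$, and seeing that the total-degree linearity alone is insufficient, so that one genuinely needs the coordinatewise inequality $\mathbf a\geq\mathbf d$ to close the estimate. Keeping the spectral sequence bookkeeping and the sign conventions in the regularity region consistent is the delicate point.
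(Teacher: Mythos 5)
Your argument is correct, and it is worth noting that the paper itself gives no proof at all: it simply cites \cite[Theorem~2.9]{BerkeschErmanSmith} and \cite[Proposition~4.11]{EisenbudErmanSchreyer}. What you have written is essentially a self-contained reconstruction of the argument behind those references. The three ingredients are all sound: (i) the observation that minimality forces every twist $S(-\mathbf a)$ in the resolution of $N=M_{\geq\mathbf d}$ to satisfy $\mathbf a\geq\mathbf d$ in addition to $\bar{\mathbf a}=\bar{\mathbf d}+i$, which is indeed the step that cannot be dispensed with; (ii) the hypercohomology spectral sequence combined with the K\"unneth description of $H^{\ast}(X,\mathcal O_X(\mathbf c))$, where your inequality $\sum_{t\in T}\lambda_t\geq\sum_{t\in T}(n_t+1)-\sum_{t\in T}(a_t-d_t)\geq (k+i-1)+|T|-i$ correctly forces $T=\emptyset$ and hence a contradiction with $k+i\geq 2$; and (iii) the passage from $N$ to $M$ via the $B$-torsion quotient $Q=M/N$, which kills $H^j_B(Q)$ for $j\geq 1$ and identifies the higher local cohomology of $M$ and $N$ where needed, leaving only the $H^0_B$ condition to be absorbed by the hypothesis $H^0_B(M)=0$. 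Two small points to make explicit if you write this up: the vanishing $H^1_B(S)=0$ (and hence the disposal of the $k+i\leq 1$ terms) uses that every factor satisfies $n_i\geq 1$, the standing assumption for products of projective spaces; and one should reduce to the \emph{minimal} free resolution of $N$, which is linear whenever some linear resolution exists, so that the coordinatewise bound $\mathbf a\geq\mathbf d$ applies.
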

\begin{proof}
	See \cite[Theorem~2.9]{BerkeschErmanSmith} or \cite[Proposition~4.11]{EisenbudErmanSchreyer}. 
\end{proof}
Unfortunately the multigraded Betti numbers of $M$ do not determine either its regularity or its linear truncations.  However, the functions $\mathtt{regularityBound}$ and $\mathtt{linearTruncationsBound}$ compute subsets of these regions using only the twists appearing in the minimal free  resolution of $M$.  In many examples they produce the same outputs as $\mathtt{multigradedRegularity}$ (from the package \emph{VirtualResolutions} \cite{VirtualResolutions}) and $\mathtt{linearTruncations}$, respectively, without computing sheaf cohomology or truncating the module.

The algorithms for $\mathtt{linearTruncationsBound}$ and $\mathtt{regularityBound}$ are based on the following theorem (from \cite{LJ}):
\begin{thm}\label{thm: LJ}
	If $H^0_B(M)=0$ and $H^1_B(M)=0$ then
	\[\bigcap_{\Tor_i(M,k)_\mathbf{b}\neq 0}\bigcup_{\sum\lambda_j=i}\left[\mathbf{b}-\lambda_1\mathbf{e}_i-\cdots-\lambda_r\mathbf{e}_r+\mathbb{N}^r\right]\]
	is a subset of the $\mathtt{linearTruncations}$ of $M$, and 
	\[\bigcap_{\Tor_i(M,k)_\mathbf{b}\neq 0}\bigcup_{\sum\lambda_j=i-1}\left[\mathbf{b}-\mathbf{1}-\lambda_1\mathbf{e}_i-\cdots-\lambda_r\mathbf{e}_r+\mathbb{N}^r\right]\]
	is a subset of the $\mathtt{multigradedRegularity}$ of $M$.
\end{thm}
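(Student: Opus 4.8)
The plan is to translate membership in each region into an elementary condition on the graded Betti numbers of $M$ and then to construct an explicit (non-minimal) free resolution of the truncation whose twists can be controlled slot by slot. Writing $|\mathbf{v}|_+ := \sum_{j=1}^{r}\max(v_j,0)$, I would first observe that $\mathbf{d}$ lies in the first region exactly when, for every $(i,\mathbf{b})$ with $\Tor_i(M,k)_\mathbf{b}\neq 0$, there is some $\lambda\in\N^r$ with $\sum_j\lambda_j=i$ and $\mathbf{d}+\sum_j\lambda_j\mathbf{e}_j\geq\mathbf{b}$; such a $\lambda$ exists if and only if $|\mathbf{b}-\mathbf{d}|_+\leq i$. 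Thus the first region is $\{\mathbf{d}: |\mathbf{b}-\mathbf{d}|_+\leq i \text{ whenever } \Tor_i(M,k)_\mathbf{b}\neq 0\}$, and the same computation rewrites the second as $\{\mathbf{d}:|\mathbf{b}-\mathbf{d}-\mathbf{1}|_+\leq i-1 \text{ whenever } \Tor_i(M,k)_\mathbf{b}\neq 0,\ i\geq 1\}$, the terms with $i=0$ imposing no condition. Note that at $i=0$ the first condition forces $\mathbf{b}\leq\mathbf{d}$.

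For the first statement I would use that truncation at $\mathbf{d}$ is exact, being exact in each graded piece, so that the minimal free resolution $F_\bullet\to M$ yields a resolution $(F_\bullet)_{\geq\mathbf{d}}\to M_{\geq\mathbf{d}}$ by the non-free modules $(F_i)_{\geq\mathbf{d}}=\bigoplus_\mathbf{b}\big(S(-\mathbf{b})_{\geq\mathbf{d}}\big)^{\beta_{i,\mathbf{b}}}$. The key lemma is that each $S(-\mathbf{b})_{\geq\mathbf{d}}$ has a linear resolution with generators in degree $\max(\mathbf{b},\mathbf{d})$: after the twist this reduces to resolving $S_{\geq\mathbf{c}}=\prod_j B_j^{\max(c_j,0)}$ with $\mathbf{c}=\mathbf{d}-\mathbf{b}$ and $B_j=(x_{j0},\dots,x_{jn_j})$, and since the $B_j$ use disjoint variables this is the tensor product of the (linear) Eagon--Northcott resolutions of the powers $B_j^{\max(c_j,0)}$, hence itself linear with $\ell$-th term in total degree $\overline{\max(\mathbf{b},\mathbf{d})}+\ell$. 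Assembling these into a Cartan--Eilenberg double complex produces a free resolution $T_\bullet$ of $M_{\geq\mathbf{d}}$ in which the contribution of the summand $S(-\mathbf{b})$ of $F_i$ to homological degree $n=i+\ell$ sits in total degree $\overline{\max(\mathbf{b},\mathbf{d})}+\ell=\overline{\mathbf{d}}+n+(|\mathbf{b}-\mathbf{d}|_+-i)$. The condition $|\mathbf{b}-\mathbf{d}|_+\leq i$ then says every generator of $T_n$ has total degree $\leq\overline{\mathbf{d}}+n$, while the $i=0$ case $\mathbf{b}\leq\mathbf{d}$ shows $T_0$ is generated exactly in degree $\mathbf{d}$. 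Since a module generated in degree $\mathbf{d}$ has minimal $n$-th syzygies in total degree $\geq\overline{\mathbf{d}}+n$, while any resolution bounds them above, the minimal resolution of $M_{\geq\mathbf{d}}$ has $\Tor_n$ concentrated exactly in total degree $\overline{\mathbf{d}}+n$ with generators in degree $\mathbf{d}$; this is precisely linearity, so $\mathbf{d}$ lies in the linear truncation region.

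For the second statement I would pass to the Maclagan--Smith definition of $\reg M$ via local cohomology. The reformulated condition $|\mathbf{b}-\mathbf{d}-\mathbf{1}|_+\leq i-1$ is exactly what bounds $\reg M$ from the twists of $F_\bullet$: using that Maclagan--Smith regularity behaves well under the short exact sequences of the resolution, and that the regularity region of each free module $S(-\mathbf{b})$ is governed by $\bigcup_{|\lambda|=i-1}[\mathbf{b}-\mathbf{1}-\sum_j\lambda_j\mathbf{e}_j+\N^r]$ after a homological shift by $i$, one intersects the shifted regularity regions of the $F_i$ to obtain a subregion of $\reg M$. Here the hypotheses $H^0_B(M)=H^1_B(M)=0$ are essential, since they guarantee that $M$ agrees with the module of twisted global sections of its sheafification, so that the low local cohomology modules contribute no spurious constraints and the vanishing of $H^i_B(M)$ in the regularity range is governed solely by the resolution. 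Consistency with the first part is visible directly: if $|\mathbf{b}-\mathbf{d}|_+\leq i$ then $|\mathbf{b}-\mathbf{d}-\mathbf{1}|_+=|\mathbf{b}-\mathbf{d}|_+-\#\{j:b_j>d_j\}\leq i-1$ whenever $i\geq 1$, recovering the inclusion of Theorem~\ref{thm:regularity}.

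The main obstacle I expect is twofold. In the first part the delicate point is the degree bookkeeping in the double complex: one must verify that the Eagon--Northcott tensor product is genuinely a linear resolution of $S_{\geq\mathbf{c}}$ occupying a single total degree in each homological slot, and that assembling the double complex introduces no generators above the diagonal, so that the sandwiching argument can force minimality onto the diagonal. In the second part the harder work is the precise comparison between the homological twists of $F_\bullet$ and the vanishing regions of $H^i_B(M)$ entering the Maclagan--Smith definition, where the $-\mathbf{1}$ shift and the replacement of $i$ by $i-1$ must be matched to that definition and the hypotheses on $H^0_B$ and $H^1_B$ must be invoked at exactly the step that discards the low-cohomology contributions.
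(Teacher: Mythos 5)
The paper itself gives no argument for this theorem---it is quoted from the reference [LJ]---so there is no internal proof to compare against; I am judging your attempt on its own. Your first half is essentially a complete and correct proof. The reformulation of membership as $|\mathbf{b}-\mathbf{d}|_+\leq i$ is right, and so is the key lemma: $S(-\mathbf{b})_{\geq\mathbf{d}}$ is the twist by $-\mathbf{b}$ of the product $\prod_j B_j^{\max(d_j-b_j,0)}$ of powers of the blockwise irrelevant ideals, and since these live in disjoint variables its minimal free resolution is the tensor product of the linear resolutions of the $B_j^{m}$ (the name \enquote{Eagon--Northcott} is only accurate for $n_j=1$; Eliahou--Kervaire, or simply $\reg\mathfrak{m}^m=m$, is what you want in general), hence linear with generators in degree $\max(\mathbf{b},\mathbf{d})$. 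For the assembly step, replace \enquote{Cartan--Eilenberg double complex} by iterated mapping cones over the short exact sequences $0\to K_i\to (F_i)_{\geq\mathbf{d}}\to K_{i-1}\to 0$: this produces a resolution with $T_n=\bigoplus_{i+\ell=n}G_{i,\ell}$ for exactly the resolutions $G_{i,\bullet}$ you chose, which disposes of your own worry about spurious generators, and the sandwich between the upper bound $\bar{\mathbf{d}}+n+(|\mathbf{b}-\mathbf{d}|_+-i)$ and the lower bound $\bar{\mathbf{d}}+n$ forced by minimality then closes the argument. (Note that this half never uses $H^0_B(M)=H^1_B(M)=0$, which is harmless for the stated implication.)

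The second half, by contrast, is a plan rather than a proof, and it contains a concrete error. All of the content lies in the sentence asserting that \enquote{the regularity region of each free module $S(-\mathbf{b})$ is governed by $\bigcup_{|\lambda|=i-1}[\mathbf{b}-\mathbf{1}-\sum_j\lambda_j\mathbf{e}_j+\N^r]$ after a homological shift by $i$}: you neither compute $H^\bullet_B(S)$ for a product of projective spaces nor run the long exact sequences that convert the twists of $F_\bullet$ into vanishing of $H^i_B(M)$ on the Maclagan--Smith test regions, and you never identify the step where $H^1_B(M)=0$ is actually used (the remark about twisted global sections is not an argument). Worse, your decision to read the $i=0$ terms of the second intersection as \enquote{no condition} makes your reformulated region equal to all of $\Z^r$ when $M=S$, which is not contained in $\reg S=\N^r$; so either the $i=0$ terms must contribute the constraint $\mathbf{d}\geq\mathbf{b}$ (as they do in the first formula) or the intersection must be interpreted differently, and until that is pinned down against the precise Maclagan--Smith definition the second inclusion does not follow. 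Your consistency check $|\mathbf{b}-\mathbf{d}-\mathbf{1}|_+\leq i-1$ for $i\geq 1$ is correct and reassuring, but it is evidence, not a proof.
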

The function $\mathtt{partialRegularities}$ calculates the  Castelnuovo--Mumford regularity in each component of a multigrading. 
\begin{rem}
In the bigraded case, Theorem~\ref{thm: LJ} implies that  $\mathbf{d}$ is in $\mathtt{linearTruncations\ M}$ if $\mathbf{d}\geq \mathtt{partialRegularities\ M}$  and $\bar{\mathbf{d}}\geq \mathtt{regularity\ M }$.
\end{rem}
For some modules $\mathtt{linearTruncationsBound}$ gives a proper subset of the linear truncations:
\begin{verbatim}
    i8 : S = multigradedPolynomialRing 2;
    i9 : M = coker(map(S^{{-1,0},{0,-1},{0,-1}},S^{{-1,-1},{-1,-1}},
    {{x_(1,0),x_(1,1)},{-x_(0,0),0},{0,-x_(0,1)}}));
    i10 : multigraded betti res M
                0   1
    o10 = 1: a+2b   .
          2:    . 2ab
    i11 : linearTruncations M
    o11 = {{0, 2}, {1, 1}}
    i12 : linearTruncationsBound M
    o12 = {{1, 1}}
\end{verbatim}

\section{Acknowledgments}
\noindent The authors would like to thank David Eisenbud for his comments.  The second author would like to thank the organizers of the Macaulay2 workshop in Leipzig (June 2018), during which the algorithms were first implemented.

\bibliographystyle{alpha}
\bibliography{bib.bib}

\end{document}